\def\={\discretionary{-}{-}{-}}
\newtheorem{df}{Definition}
\newtheorem{tw}[df]{Theorem}
\newtheorem{lm}[df]{Lemma}
\newtheorem{corl}{Corollary}
\newtheorem{wn}[df]{Corollary}
\newtheorem{prob}[df]{Problem}
\DeclareMathOperator\domk{cl}
\DeclareMathOperator\wne{int}
\author{Piotr Sworowski, Waldemar Sieg}
\address{Kazimierz Wielki University\endgraf Institute of Mathematics\endgraf
Powsta\'nc\'ow Wielkopolskich 2\endgraf 85-090 Bydgoszcz\endgraf Poland\endgraf e-mail: {\tt piotrus@ukw.edu.pl}
\endgraf e-mail: {\tt waldeks@ukw.edu.pl}}
\title{Uniform limits of $\mathcal B_1^{**}$-functions}
\keywords{Baire-one-star function, Pawlak's class $\mathcal B_1^{**}$, Ma\l ek's classes $\mathscr S_\alpha$, classes $u\mathscr S_\alpha$, oscillation rank, Kat\v etov--Tong insertion theorem}
\begin{document}
\begin{abstract}
We characterise the class of uniform limits of functions from Paw\-lak's class $\mathcal B_1^{**}$. The resulting class $u\mathscr S_1$, which contains functions with the oscillation rank one, is discussed in connection with its linear span. 
We apply a general, topological space, setting in our discussion.
\end{abstract}
\maketitle
\section{Introduction}
The so-called {\em Baire-one-star} ($\mathcal B_1^*$) property of a real-valued function, defined on an interval in the real line, has received certain attention over recent couple of decades, often in connection with generalised differentiation and integration, where it is investigated if some generalised primitives or indefinite integrals nece\-ssarily obey to this, stronger than Baire-one, property; see, e.g., \cite{Gordon6}. A different kind of application reveals a work by one of the authors \cite{MalSwor2}, where the $\mathcal B_1^*$ property was used as a main tool for a characterisation of integrability analogous to the Lebesgue theorem on the Riemann integral.\footnote{Quite recently, $B_1^*$ functions received some attention in connection with dynamical systems, as so-called {\em$k$-continuous functions} or {\em right $B_1$ compositors} \cite{Ali}.}

\smallskip
Under the Baire-one-star name, and in generalised differentiation connection, the property was introduced by Richard J.\ O'Malley in \cite{OM1}, but is also known as the {\em Baire half} property after Harvey Rosen's paper \cite{Rosen}, or as {\em piecewise continuity} \cite{JayRo}. (See \cite{Kirchheim} for more info, various descriptions of $\mathcal B_1^*$ and related properties.) For the present note we can accept the following definition: an $f\colon X\to Y$, $X,Y$ topological spaces, has the $\mathcal B_1^*$ property if for every closed $D\subset X$, there is an open $O\subset X$, $D\cap O\ne\emptyset$, such that the restriction $f\restriction(D\cap O)$ is continuous (in its domain).

\smallskip
Certain limit properties of $\mathcal B_1^*$ make this class technically useful in many Baire hierarchy related problems. A general result found in Kuratowski's monograph \cite[pp.\ 294--295]{Kuratowski} says that, for an arbitary ordinal $\alpha$, any Baire class $\alpha$ function can be obtained as the uniform limit of a sequence of Baire-$\alpha$ functions all whose ranges are discrete sets. For $\alpha=1$ and in, e.g., the  $\mathbb R\to\mathbb R$ case, the discrete range property implies $\mathcal B_1^*$ property. Thus one can claim that Baire class one functions are exactly limits of uniformly convergent sequences of Baire-one-star functions. Hence, for approximation problems, the $\mathcal B_1^*$ property may be seen, in a sense, exactly as useful as $\mathcal B_1$.

\smallskip
The main objective of this note is to provide a simple characterisation for the uniform closure of the class of {\em$\mathcal B_1^{**}$-functions}, a subclass of $\mathcal B_1^*$, first considered by Ryszard J.\ Pawlak in \cite{Pawlak}. 
Since the characterisation theorem allows a simple and brief proof, we strove for presenting it in as much as possible general setting.
\section{The characterisation}
Let $(X,\mathcal T)$ be a topological space, $(Y,\rho)$ a metric space. For a function $f\colon X\to Y$ and $x\in X$ we denote with $\omega_f(x)$ the {\em oscillation} of $f$ {\em at} $x$, that is, $\omega_f(x)=\inf_O\sup_{\xi\in O}\varrho(f(x),f(\xi))$, where the inf ranges over all neighbourhoods $O\ni x$. This should not be confused with the {\em oscillation} of $f$ {\em on} a subset $Z\subset X$, which is the diameter of $f(Z)$. (Clearly, $\omega_f(x)=0$ iff $f$ is continuous at $x$.) For two functions $f,g\colon X\to Y$ we set $\varrho_\infty(f,g)=\sup_{\xi\in X}\varrho(f(\xi),g(\xi))$.\label{vbwlkviyfvwl}

\smallskip
By definition, a function $f\colon X\to Y$ is $\mathcal B_1^{**}$ if the restriction of $f$ to the set of its discontinuity points, $D_f$, if nonempty, is continuous.\footnote{In such case, notice, $D_f$ must be nowhere dense in $X$, so that $f$ has also $\mathcal B_1^*$ property.} The notation $\mathcal B_1^{**}(X,Y)$ stands for the class of all $\mathcal B_1^{**}$-functions defined over $X$ with values in $Y$. If $Y$ is the real line with the natural metric, we write instead just $\mathcal B_1^{**}(X)$.
\begin{tw}\label{aa}
Assume $f_n\rightrightarrows f$, where all $f_n\in\mathcal B_1^{**}(X,Y)$. The following condition holds:
\begin{equation}\label{star}\tag{$\star$}
\text{if $\varepsilon>0$, the restriction of $f$ to the set $D_f^\varepsilon=\{\xi\in X:\omega_f(\xi)\ge\varepsilon\}$ is continuous}.
\end{equation}
\end{tw}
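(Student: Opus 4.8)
The plan is to use the fact that uniform convergence controls the oscillation function uniformly, so that the ``large oscillation'' set $D_f^\varepsilon$ of the limit is swallowed by the discontinuity set of a single approximating $f_n$; on that set $f_n$ is continuous by hypothesis, and continuity of $f$ on $D_f^\varepsilon$ then follows because a uniform limit of continuous functions is continuous.

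First I would establish the elementary comparison of oscillations. Writing $\delta=\varrho_\infty(f_n,f)$, for every $x\in X$, every neighbourhood $O\ni x$ and every $\xi\in O$ the triangle inequality gives
\[
\varrho(f(x),f(\xi))\le\varrho(f(x),f_n(x))+\varrho(f_n(x),f_n(\xi))+\varrho(f_n(\xi),f(\xi))\le 2\delta+\varrho(f_n(x),f_n(\xi)).
\]
Passing to $\sup_{\xi\in O}$ and then to $\inf_O$ yields $\omega_f(x)\le 2\delta+\omega_{f_n}(x)$, that is, $\omega_{f_n}(x)\ge\omega_f(x)-2\varrho_\infty(f_n,f)$ for all $x$.

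Now fix $\varepsilon>0$. Because $f_n\rightrightarrows f$, for all sufficiently large $n$ we have $\varrho_\infty(f_n,f)<\varepsilon/2$, and then every $\xi\in D_f^\varepsilon$ satisfies
\[
\omega_{f_n}(\xi)\ge\omega_f(\xi)-2\varrho_\infty(f_n,f)\ge\varepsilon-2\varrho_\infty(f_n,f)>0,
\]
so $\xi$ is a discontinuity point of $f_n$. Hence $D_f^\varepsilon\subseteq D_{f_n}$ for every such $n$. If $D_f^\varepsilon=\emptyset$ there is nothing to prove; otherwise $D_{f_n}\ne\emptyset$, so by the $\mathcal B_1^{**}$ property $f_n\restriction D_{f_n}$ is continuous, and by transitivity of the subspace topology its further restriction $f_n\restriction D_f^\varepsilon$ is continuous as well.

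Finally, for all these large $n$ the continuous functions $f_n\restriction D_f^\varepsilon$ converge uniformly to $f\restriction D_f^\varepsilon$, since $\varrho_\infty(f_n\restriction D_f^\varepsilon,f\restriction D_f^\varepsilon)\le\varrho_\infty(f_n,f)\to 0$. As a uniform limit of continuous functions from a topological space into a metric space is continuous (a standard $\eta/3$ argument in the subspace $D_f^\varepsilon$), the restriction $f\restriction D_f^\varepsilon$ is continuous, which is exactly $(\star)$. I do not expect a genuine obstacle here: the only points demanding attention are keeping the oscillation inequality strict, so that the inclusion $D_f^\varepsilon\subseteq D_{f_n}$ is real, and disposing of the degenerate cases in which $D_f^\varepsilon$ or $D_{f_n}$ is empty.
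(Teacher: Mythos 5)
Your proof is correct. The core of your argument---the oscillation comparison $\omega_{f_n}(\xi)\ge\omega_f(\xi)-2\varrho_\infty(f_n,f)$, which forces $D_f^\varepsilon\subseteq D_{f_n}$ once $\varrho_\infty(f_n,f)<\varepsilon/2$---is exactly the estimate at the heart of the paper's proof. Where you diverge is in how the conclusion is drawn. The paper argues by contradiction at a single point $x\in D_f^\varepsilon$ with a \emph{single} well-chosen approximant $f_n$: it transfers a putative discontinuity witness of $f\restriction D_f^\varepsilon$ at $x$ (points $\xi$ with $\varrho(f(x),f(\xi))\ge\eta$) into a discontinuity witness of $f_n\restriction D_{f_n}$ at $x$ via the triangle inequality, contradicting $f_n\in\mathcal B_1^{**}$. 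You instead run a direct argument over the whole tail of the sequence: each $f_n\restriction D_f^\varepsilon$ is continuous as a further restriction of the continuous $f_n\restriction D_{f_n}$, and the tail converges uniformly on the subspace $D_f^\varepsilon$, so the classical theorem on uniform limits of continuous maps into a metric space finishes the job. Your version is more modular and arguably cleaner, at the cost of invoking the uniform-limit theorem and using infinitely many approximants; the paper's version is self-contained and needs only one $f_n$. Both are sound, and your handling of the degenerate case $D_f^\varepsilon=\emptyset$ and of the strictness of the inequality is appropriate.
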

\begin{proof}
Let $x\in D_f^\varepsilon$. Suppose to the contrary that, for some $\eta>0$, in every $O\in\mathcal T$, $O\ni x$, there is a $\xi\in D_f^\varepsilon$ such that $\varrho(f(x),f(\xi))\ge\eta$. Fix an $n\in\mathbb N$ large enough so that $\varrho_\infty(f,f_n)<\min{\{\eta/4,\varepsilon/2\}}$. Then, if $\xi$ is chosen for a given $O\in\mathcal T$, $O\ni x$, as above, we have
\[
\varrho(f_n(x),f_n(\xi))\ge\varrho(f(x),f(\xi))-\varrho(f_n(\xi),f(\xi))-\varrho(f_n(x),f(x))>\eta-2\cdot\frac\eta4=\frac\eta2,
\]
so $x\in D_{f_n}$. Moreover,
\[
\omega_{f_n}(\xi)>\omega_f(\xi)-2\cdot\frac\varepsilon2\ge0.
\]
It means that all $\xi\in D_{f_n}$ and so $f_n\restriction D_{f_n}$ is discontinuous at $x$, which arrives in contradiction with $f_n\in\mathcal B_1^{**}(X,Y)$.
\end{proof}
For the opposite implication we assume the domain space $X$ is $T_5$ (i.e., hereditarily normal and $T_1$).
\begin{tw}\label{aaa}
Assume $f\colon X\to\mathbb R$ satisfies \eqref{star}. Then there is a sequence $(f_n)_{n=1}^\infty\subset\mathcal B_1^{**}(X)$ such that $f_n\rightrightarrows f$.
\end{tw}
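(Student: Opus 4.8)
The plan is to build, for each $n\in\mathbb N$, a function $f_n\in\mathcal B_1^{**}(X)$ with $\varrho_\infty(f_n,f)\le 1/n$ that coincides with $f$ on a closed set carrying all of $f$'s essential discontinuities and is genuinely continuous off it. The driving remark is elementary: if $g\colon X\to\mathbb R$ is continuous at every point of an open set $U$ and one leaves its values on $X\setminus U$ arbitrary, then $D_g\subseteq X\setminus U$, because $U$ is a neighbourhood of each of its points and continuity is a local property. Hence, writing $F_n$ for a suitable closed set and $U_n=X\setminus F_n$, it will suffice to (i) keep $f_n=f$ on $F_n$ with $f\restriction F_n$ continuous, and (ii) replace $f$ on $U_n$ by a continuous function staying within $1/n$ of $f$; then $D_{f_n}\subseteq F_n$ and $f_n\restriction D_{f_n}$ is continuous, i.e.\ $f_n\in\mathcal B_1^{**}(X)$, while $\varrho_\infty(f_n,f)\le 1/n$.

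A point that needs care is the choice of $F_n$. With the present, $f(x)$-relative, definition of oscillation the function $x\mapsto\omega_f(x)$ need not be upper semicontinuous, so $D_f^{1/n}$ may fail to be closed; however a one-line estimate shows $\limsup_{y\to x}\omega_f(y)\le 2\omega_f(x)$, whence $\domk(D_f^{1/n})\subseteq D_f^{1/(2n)}$. I therefore take $F_n=\domk\!\big(D_f^{1/n}\big)$. This set is closed, $f\restriction F_n$ is continuous by \eqref{star} applied with $\varepsilon=1/(2n)$, and on the open set $U_n=X\setminus F_n$ one has $\omega_f<1/n$ everywhere, since each point of $U_n$ has a whole neighbourhood disjoint from $D_f^{1/n}$.

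The heart of the matter is producing the continuous $1/n$-approximant on $U_n$, and this is exactly where the $T_5$ hypothesis enters. Form the envelopes $M(x)=\inf\{\sup_{\xi\in O}f(\xi):x\in O\subseteq U_n\ \text{open}\}$ and $m(x)=\sup\{\inf_{\xi\in O}f(\xi):x\in O\subseteq U_n\ \text{open}\}$; they are finite and satisfy $f-1/n\le m\le f\le M\le f+1/n$ on $U_n$, with $M$ upper semicontinuous and $m$ lower semicontinuous. Then $M-1/n$ is upper semicontinuous, $m+1/n$ is lower semicontinuous, and $M-1/n\le m+1/n$ (as $M-m\le 2/n$). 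Because $X$ is hereditarily normal, the open subspace $U_n$ is itself normal, so the Kat\v etov--Tong insertion theorem supplies a continuous $\phi\colon U_n\to\mathbb R$ with $M-1/n\le\phi\le m+1/n$; combined with $m\le f\le M$ this gives $|\phi-f|\le 1/n$ on $U_n$. Setting $f_n=\phi$ on $U_n$ and $f_n=f$ on $F_n$ completes the construction, and $f_n\rightrightarrows f$ follows.

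I expect the two delicate points to be precisely these: arranging the semicontinuity in the direction demanded by Kat\v etov--Tong---secured above by passing to the envelopes $M,m$ rather than to the non-semicontinuous functions $f\pm1/n$---and, more essentially, guaranteeing normality of the \emph{open, generally non-closed} subspace $U_n$, which is why hereditary normality, and not merely normality of $X$, is assumed.
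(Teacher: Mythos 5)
Your proof is correct and follows essentially the same route as the paper's: restrict to a closed set on which \eqref{star} makes $f$ continuous, insert a continuous function between the shifted upper and lower envelopes on the open complement via Kat\v etov--Tong (using hereditary normality), and glue. The only divergence is your replacement of $D_f^{1/n}$ by $\domk\bigl(D_f^{1/n}\bigr)$ together with the estimate $\domk\bigl(D_f^{1/n}\bigr)\subset D_f^{1/(2n)}$ --- a legitimate and welcome precaution, since with the pointed definition of oscillation the set $D_f^\varepsilon$ need not be closed for an arbitrary $f$ (it is closed under \eqref{star}, but the paper asserts this without argument), and your variant sidesteps the issue entirely.
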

\begin{proof}
Let $\varepsilon>0$. Note that $D_f^\varepsilon$ is a closed subset of $X$. At every $x\in U_f^\varepsilon=X\setminus D_f^\varepsilon$ define $h_1(x)$ and $h_2(x)$ to be correspondingly
$$
\inf_O\sup_{\xi\in O}f(\xi)\qquad\text{and}\qquad\sup_O\inf_{\xi\in O}f(\xi),
$$
where the outer extrema range over all neighbourhoods $O\ni x$. Note that $h_1$ and $h_2$, as functions defined on $U_f^\varepsilon$, are respectively upper and lower semicontinuous. It is easy to check that
$$
f(x)+\varepsilon>f(x)+\omega_f(x)=h_1(x)\ge f(x)\ge h_2(x)=f(x)-\omega_f(x)>f(x)-\varepsilon,
$$
whence $h_2(x)+\varepsilon>f(x)>h_1(x)-\varepsilon$, $x\in U_f^\varepsilon$. As the subspace $U_f^\varepsilon$ of $X$ is $T_4$, from the Kat\v etov--Tong insertion theorem \cite{Katet,Tong}, there exists a continuous function $g\colon U_f^\varepsilon\to\mathbb R$ such that, at every $x\in U_f^\varepsilon$,
$$
f(x)+\varepsilon\ge h_2(x)+\varepsilon\ge g(x)\ge h_1(x)-\varepsilon\ge f(x)-\varepsilon.
$$
So, $|f(x)-g(x)|\le\varepsilon$, $x\in U_f^\varepsilon$. Define $\tilde f$ as $f$ over $D_f^\varepsilon$ and as $g$ over $U_f^\varepsilon$. Obviously, $\varrho_\infty(f,\tilde f)\le\varepsilon$. Note that $\tilde f\in\mathcal B_1^{**}(X)$. Indeed, $D_{\tilde f}\subset D_f^\varepsilon$, so the restriction $\tilde f{\restriction}_{D_{\tilde f}}=f{\restriction}_{D_{\tilde f}}$ is continuous.
\end{proof}
\begin{corl}\label{idfbiwu}
Let $X$ be a $T_5$ topological space and $f\colon X\to\mathbb R$. The following two statements are equivalent:
\begin{itemize}
\item $f$ satisfies \eqref{star};
\item there is a sequence $(f_n)_{n=1}^\infty\subset\mathcal B_1^{**}(X)$ with $f_n\rightrightarrows f$ (abbr.\ $f\in u\mathcal B_1^{**}(X)$).
\end{itemize}
\end{corl}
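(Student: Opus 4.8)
The plan is to recognise that Corollary \ref{idfbiwu} records nothing more than the conjunction of the two theorems just proved; the work is bookkeeping about hypotheses rather than any fresh argument. I would split the asserted equivalence into its two implications and match each to the appropriate theorem, noting at the outset that the corollary's standing assumptions are that $X$ is $T_5$ and that $f$ is real-valued.

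For the direction asserting that membership in $u\mathcal B_1^{**}(X)$ forces \eqref{star}, I would simply invoke Theorem \ref{aa}. By the abbreviation introduced in the statement, $f\in u\mathcal B_1^{**}(X)$ means there is a sequence $(f_n)_{n=1}^\infty\subset\mathcal B_1^{**}(X)$ with $f_n\rightrightarrows f$. Specialising the target space $Y$ in Theorem \ref{aa} to the real line with its usual metric, that theorem yields at once that $f$ satisfies \eqref{star}; note that no separation hypothesis on $X$ is used in this direction. For the converse I would apply Theorem \ref{aaa}: if $f\colon X\to\mathbb R$ satisfies \eqref{star}, then since $X$ is $T_5$, Theorem \ref{aaa} produces a sequence $(f_n)_{n=1}^\infty\subset\mathcal B_1^{**}(X)$ with $f_n\rightrightarrows f$, that is, $f\in u\mathcal B_1^{**}(X)$.

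The only point deserving a moment's care — and it is scarcely an obstacle — is to verify that the two theorems combine under a single set of hypotheses. Theorem \ref{aa} holds for arbitrary $X$ and arbitrary metric $Y$, whereas Theorem \ref{aaa} is confined to $Y=\mathbb R$ and requires $X$ to be $T_5$, hereditary normality being exactly what is needed when its construction passes to the subspace $U_f^\varepsilon$ and invokes the Kat\v etov--Tong insertion theorem. The corollary's hypotheses therefore supply precisely what is required to make both implications available at once, and the equivalence follows immediately.
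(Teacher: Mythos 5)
Your proposal is correct and matches the paper's (implicit) reasoning exactly: the corollary is stated without proof precisely because it is the immediate conjunction of Theorem \ref{aa} (specialised to $Y=\mathbb R$) and Theorem \ref{aaa}, with the $T_5$ hypothesis needed only for the latter. Nothing further is required.
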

\begin{prob}
Is $T_5$ essential for Theorem \ref{aaa}?
\end{prob}
The problem boils down to finding the function $g$ (cf.\ the proof). Despite the Kat\v etov--Tong theorem can be applied only if the underlying space $X$ is $T_5$ (and so $U_f^\varepsilon$ is $T_4$), the existence of $g$ can be granted in some particular topological spaces that are far from $T_5$, e.g., in $\mathbb R$ with co-countable topology, which isn't even Hausdorff.

\smallskip
The condition \eqref{star} in terms of \cite{KechLou} means that the {\em oscillation rank} of $f$, $\beta(f)$, equals (at most) one. The iterates of $D^\varepsilon_f$ considered in the next section correspond to larger (still finite) values of $\beta(f)$, namely, $f\in u\mathscr S_n$ iff $\beta(f)\le n$. The oscillation rank has been introduced, with finite and countable values, in order to serve for classifying members of the Baire one class. The range of $\beta(f)$ in the present note is confined to $\mathbb N$, that is, to $f$ living in the {\em first small Baire class} \cite{KechLou}.
\section{Iterates of $D_f^\varepsilon$ and the linear span of $u\mathcal B_1^{**}$}
We proceed with a discussion on sums of functions from $u\mathcal B_1^{**}$. To this end, we need to recall some concepts (although with modified notation) from \cite{MalekIT,Malek} and then extend them. Let $X$ be a topological space, $f\colon X\to\mathbb R$, and $D^0_f$ the closure of set of discontinuity points of $f$. Moreover, let $f^0=f\restriction\domk{D^0_f}$. By induction, given $f^{0,\ldots,0}$ (or briefly $f^{n\textasteriskcentered0}$) and $D^{0,\ldots,0}_f$ (or $D^{n\textasteriskcentered0}_f$), $n\in\mathbb N$ is the number of upper indices here, we set $D^{(n+1)\textasteriskcentered0}_f$ to be the closure of the set of discontinuity points of $f^{n\textasteriskcentered0}\colon D^{n\textasteriskcentered0}_f\to\mathbb R$. We denote with $\mathscr S_n(X)$ (or $\mathscr S_n$ when the domain is understood) the class of all $f$ on $X$ such that $D^{(n+1)\textasteriskcentered0}_f=\emptyset$. Note that (cf.\ page \pageref{vbwlkviyfvwl}) $\mathcal B_1^{**}(X)=\mathscr S_1(X)$.

\smallskip
Now let $\varepsilon>0$. Recall that $D^\varepsilon_f$ is the set of points $x\in X$ such that $\omega_f(x)\ge\varepsilon$, and let $f^\varepsilon$ be $f\restriction D^\varepsilon_f$. Inductively, given arbitrary $\varepsilon_1,\ldots,\varepsilon_n,\varepsilon_{n+1}>0$, $n\in\mathbb N$, we set $D^{\varepsilon_1,\ldots,\varepsilon_n,\varepsilon_{n+1}}_f$ to be the set of points $x\in D^{\varepsilon_1,\ldots,\varepsilon_n}_f$ such that the oscillation of $f^{\varepsilon_1,\ldots,\varepsilon_n}$ (defined on $D_f^{\varepsilon_1,\ldots,\varepsilon_n}$) at $x$, written as $\omega^{\varepsilon_1,\ldots,\varepsilon_n}_f(x)$, is $\ge\varepsilon_{n+1}$; $f^{\varepsilon_1,\ldots,\varepsilon_n,\varepsilon_{n+1}}=f\restriction D^{\varepsilon_1,\ldots,\varepsilon_n,\varepsilon_{n+1}}_f$, $U^{\varepsilon_1,\ldots,\varepsilon_n}_f=X\setminus D^{\varepsilon_1,\ldots,\varepsilon_n}_f$. We will apply the following abbreviation in case all bounds $\varepsilon_1,\ldots,\varepsilon_n$ are equal:
$$D_f^{\overbrace{\scriptscriptstyle\varepsilon,\ldots,\varepsilon}^n}=D_f^{n\textasteriskcentered\varepsilon},\qquad f^{\overbrace{\scriptscriptstyle\varepsilon,\ldots,\varepsilon}^n}=f^{n\textasteriskcentered\varepsilon},\qquad\omega_f^{\overbrace{\scriptscriptstyle\varepsilon,\ldots,\varepsilon}^n}=\omega_f^{n\textasteriskcentered\varepsilon}.$$
The class $u\mathscr S_n(X)$, or just $u\mathscr S_n$, is defined as the class of all functions $f\colon X\to\mathbb R$ such that for arbitrary $\varepsilon_1,\ldots,\varepsilon_{n+1}$, $D^{\varepsilon_1,\ldots,\varepsilon_{n+1}}_f=\emptyset$ (note that it is tantamount to $D^{(n+1)\textasteriskcentered\varepsilon}_f=\emptyset$ for every $\varepsilon>0$). By this definition and by Corollary \ref{idfbiwu}, provided $X$ is $T_5$, $u\mathcal B_1^{**}(X)=u\mathscr S_1(X)$.
\begin{tw}\label{vgebb}
If a sequence $(f_m)_{m=1}^\infty\subset \mathscr S_n(X)$ tends uniformly to an $f$, then $f\in u\mathscr S_n(X)$.
\end{tw}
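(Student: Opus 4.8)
The plan is to reduce the statement to two purely set-theoretic inclusions between the iterated sets $D^{\bullet}_{\bullet}$, each proved by a short induction on the number of superscripts, and then to combine them. Throughout I will use only two elementary properties of the (sub)space oscillation, both already implicit in the proof of Theorem \ref{aa}. First, \emph{monotonicity under restriction}: if $A\subseteq B\subseteq X$ and $x\in A$, then $\omega_{f\restriction A}(x)\le\omega_{f\restriction B}(x)$, since every relative neighbourhood of $x$ in $A$ is contained in the corresponding one in $B$. Second, \emph{stability under uniform perturbation}: if $\varrho_\infty(f,g)<\delta$, then $|\omega_{f\restriction A}(x)-\omega_{g\restriction A}(x)|\le2\delta$ on any common subspace $A$, by the triangle inequality applied inside the defining suprema.

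The first inclusion (a perturbation lemma) reads: if $\varrho_\infty(f,g)<\delta$ and $\varepsilon_1,\dots,\varepsilon_k>2\delta$, then
\[
D^{\varepsilon_1,\dots,\varepsilon_k}_f\subseteq D^{\varepsilon_1-2\delta,\dots,\varepsilon_k-2\delta}_g.
\]
I would prove this by induction on $k$. The base case $k=1$ is immediate: $x\in D^{\varepsilon_1}_f$ means $\omega_f(x)\ge\varepsilon_1$, hence $\omega_g(x)\ge\varepsilon_1-2\delta$ by stability. For the step, take $x\in D^{\varepsilon_1,\dots,\varepsilon_k}_f$; the inductive hypothesis gives $A:=D^{\varepsilon_1,\dots,\varepsilon_{k-1}}_f\subseteq B:=D^{\varepsilon_1-2\delta,\dots,\varepsilon_{k-1}-2\delta}_g$, and in particular $x\in B$. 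Stability on the common subspace $A$ yields $\omega_{g\restriction A}(x)\ge\omega_{f\restriction A}(x)-2\delta\ge\varepsilon_k-2\delta$, and monotonicity ($A\subseteq B$) upgrades this to $\omega^{\varepsilon_1-2\delta,\dots,\varepsilon_{k-1}-2\delta}_g(x)=\omega_{g\restriction B}(x)\ge\varepsilon_k-2\delta$, whence $x\in D^{\varepsilon_1-2\delta,\dots,\varepsilon_k-2\delta}_g$.

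The second inclusion compares the $\varepsilon$-indexed iterates with the $0$-indexed ones: for \emph{any} $g$ and positive $\varepsilon_1,\dots,\varepsilon_k$,
\[
D^{\varepsilon_1,\dots,\varepsilon_k}_g\subseteq D^{k\textasteriskcentered0}_g.
\]
Again I induct on $k$. For $k=1$, $D^{\varepsilon_1}_g\subseteq D_g\subseteq\domk D_g=D^0_g$. For the step, a point $x\in D^{\varepsilon_1,\dots,\varepsilon_k}_g$ lies in $A:=D^{\varepsilon_1,\dots,\varepsilon_{k-1}}_g\subseteq B:=D^{(k-1)\textasteriskcentered0}_g$ by the inductive hypothesis and has $\omega_{g\restriction A}(x)\ge\varepsilon_k>0$; monotonicity gives $\omega_{g\restriction B}(x)>0$, so $x$ is a discontinuity point of $g^{(k-1)\textasteriskcentered0}=g\restriction B$, and therefore lies in the closure $D^{k\textasteriskcentered0}_g$ of the set of such points.

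With both inclusions in hand the conclusion is immediate, and notably needs no separation axiom on $X$. Fix $\varepsilon_1,\dots,\varepsilon_{n+1}>0$ and choose $m$ with $\varrho_\infty(f,f_m)<\delta$ for some $\delta$ satisfying $2\delta<\min_i\varepsilon_i$. The perturbation lemma gives $D^{\varepsilon_1,\dots,\varepsilon_{n+1}}_f\subseteq D^{\varepsilon_1-2\delta,\dots,\varepsilon_{n+1}-2\delta}_{f_m}$, the comparison inclusion gives $D^{\varepsilon_1-2\delta,\dots,\varepsilon_{n+1}-2\delta}_{f_m}\subseteq D^{(n+1)\textasteriskcentered0}_{f_m}$, and the last set is empty because $f_m\in\mathscr S_n(X)$. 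Hence $D^{\varepsilon_1,\dots,\varepsilon_{n+1}}_f=\emptyset$ for every choice of the bounds, i.e.\ $f\in u\mathscr S_n(X)$. I expect the one delicate point to be the bookkeeping in the perturbation lemma: the two iterates live over \emph{different} domains (those of $f$ and of $g$), and it is precisely the interplay of restriction-monotonicity with the uniform $2\delta$-estimate that lets one transfer from $f$'s chain of sets to $g$'s, with a single, non-accumulating loss of $2\delta$ at each prescribed level.
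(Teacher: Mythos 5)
Your argument is correct, and it takes a genuinely different route from the paper's. The paper proves Theorem \ref{vgebb} by induction on $n$: the base case is Theorem \ref{aa}, and the inductive step notes that for large $m$ one has $D^\varepsilon_f\subset D_{f_m}$, so the restrictions $f_m\restriction D^\varepsilon_f$ form a uniformly convergent sequence of $\mathscr S_n$ functions on the closed set $D^\varepsilon_f$, to which the inductive hypothesis applies. You instead establish two self-contained inclusions --- the perturbation lemma $D^{\varepsilon_1,\dots,\varepsilon_k}_f\subseteq D^{\varepsilon_1-2\delta,\dots,\varepsilon_k-2\delta}_g$ for $\varrho_\infty(f,g)<\delta$, and the comparison $D^{\varepsilon_1,\dots,\varepsilon_k}_g\subseteq D^{k\textasteriskcentered0}_g$ --- each by induction on the length of the index string, and then apply both to a single sufficiently close $f_m$. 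Your route buys several things: it bypasses Theorem \ref{aa} entirely rather than using it as a base case; it makes explicit, and justifies, the restriction-monotonicity and uniform-stability of the subspace oscillation that the paper's phrase ``the latter means $f_m$ is in $\mathscr S_n$ on $D^\varepsilon_f$'' leaves implicit (that step tacitly uses that $\mathscr S_{n+1}$ functions restrict to $\mathscr S_n$ functions on closed subsets of $D^0_{f_m}$); and it quantifies the loss as a non-accumulating $2\delta$ at each level. The paper's argument is shorter on the page because it delegates the analytic estimate to Theorem \ref{aa}, but yours is arguably more transparent. Both approaches correctly require no separation axiom on $X$.
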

\begin{proof}
We will proceed by induction on $n$. The case $n=1$ is the content of Theorem \ref{aa}. Assume the claim holds for some $n$ and take a sequence $(f_m)_{m=1}^\infty\subset \mathscr S_{n+1}(X)$, $f_m\rightrightarrows f$. Take an $\varepsilon>0$. For almost all $m$, $\|f-f_m\|_\infty<\varepsilon/2$ and so (for these $m$) $D^\varepsilon_f\subset D_{f_m}$; the latter means $f_m$ is in $\mathscr S_n$ on $D^\varepsilon_f$. By the assumption, as $f_m\rightrightarrows f$ on $D^\varepsilon_f$, $f^\varepsilon\in u\mathscr S_n(D^\varepsilon_f)$, so $D^{(n+1)\textasteriskcentered\varepsilon}_f=\emptyset$.
\end{proof}
\begin{tw}\label{asssa}
Let $X$ be $T_5$, $n\in\mathbb N$. If $f\in u\mathscr S_n(X)$, then there exists a sequence $(f_m)_{m=1}^\infty\subset\mathscr S_n(X)$ such that $f_m\rightrightarrows f$.
\end{tw}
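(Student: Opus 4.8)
The plan is to argue by induction on $n$, the base case $n=1$ being precisely Theorem \ref{aaa}, applied after recalling that \eqref{star} is equivalent to $f\in u\mathscr S_1(X)$. So I would assume the statement for $n-1$ over every $T_5$ space, take $f\in u\mathscr S_n(X)$, and fix $\varepsilon>0$; it then suffices to produce a single $\tilde f\in\mathscr S_n(X)$ with $\varrho_\infty(f,\tilde f)\le\varepsilon$, since letting $\varepsilon=1/m$ yields the required sequence. As in Theorem \ref{aaa}, split $X$ into the closed set $D_f^\varepsilon$ and the open set $U_f^\varepsilon=X\setminus D_f^\varepsilon$.

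On $U_f^\varepsilon$ I would reuse the function delivered by the proof of Theorem \ref{aaa}: a continuous $g\colon U_f^\varepsilon\to\mathbb R$ with $|f-g|\le\varepsilon$ there. On $D_f^\varepsilon$ the idea is to invoke the inductive hypothesis. First observe that the oscillation-iterates of $f^\varepsilon=f\restriction D_f^\varepsilon$, computed inside the subspace $D_f^\varepsilon$, coincide with those of $f$, namely $D_{f^\varepsilon}^{\delta_1,\ldots,\delta_k}=D_f^{\varepsilon,\delta_1,\ldots,\delta_k}$. Since $f\in u\mathscr S_n(X)$ forces every $(n+1)$-fold iterate $D_f^{\varepsilon,\delta_1,\ldots,\delta_n}$ to be empty, we get $f^\varepsilon\in u\mathscr S_{n-1}(D_f^\varepsilon)$. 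Being a subspace of a $T_5$ space, $D_f^\varepsilon$ is itself $T_5$, so the inductive hypothesis furnishes $h\in\mathscr S_{n-1}(D_f^\varepsilon)$ with $\varrho_\infty(f^\varepsilon,h)\le\varepsilon$. Now set $\tilde f=h$ on $D_f^\varepsilon$ and $\tilde f=g$ on $U_f^\varepsilon$; then $\varrho_\infty(f,\tilde f)\le\varepsilon$.

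The crux, and where I expect the real work, is verifying $\tilde f\in\mathscr S_n(X)$, i.e.\ that the surgery raises the closure-depth by exactly one. Since $g$ is continuous on the \emph{open} set $U_f^\varepsilon$, every point there is a continuity point of $\tilde f$, whence $D_{\tilde f}\subset D_f^\varepsilon$ and so $D_{\tilde f}^0\subset D_f^\varepsilon$; moreover $\tilde f^0=h\restriction D_{\tilde f}^0$, because $\tilde f$ agrees with $h$ on all of $D_f^\varepsilon$. From here I would prove, by induction on $k\ge0$ (with the convention $h^{0\textasteriskcentered0}=h$, $D_h^{0\textasteriskcentered0}=D_f^\varepsilon$), that $D_{\tilde f}^{(k+1)\textasteriskcentered0}\subset D_h^{k\textasteriskcentered0}$ and $\tilde f^{(k+1)\textasteriskcentered0}=h^{k\textasteriskcentered0}\restriction D_{\tilde f}^{(k+1)\textasteriskcentered0}$. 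The inductive step rests on the elementary fact that restricting a function to a subset can only destroy, never create, discontinuities: the discontinuity set of $\tilde f^{(k+1)\textasteriskcentered0}=h^{k\textasteriskcentered0}\restriction D_{\tilde f}^{(k+1)\textasteriskcentered0}$ is contained in that of $h^{k\textasteriskcentered0}$ on $D_h^{k\textasteriskcentered0}$, and passing to closures — which coincide whether formed in $D_{\tilde f}^{(k+1)\textasteriskcentered0}$, in $D_f^\varepsilon$, or in $X$, all the relevant sets being closed in $X$ — yields $D_{\tilde f}^{(k+2)\textasteriskcentered0}\subset D_h^{(k+1)\textasteriskcentered0}$.

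Feeding $k=n$ into this chain and recalling that $h\in\mathscr S_{n-1}(D_f^\varepsilon)$ means $D_h^{n\textasteriskcentered0}=\emptyset$, we obtain $D_{\tilde f}^{(n+1)\textasteriskcentered0}=\emptyset$, that is $\tilde f\in\mathscr S_n(X)$; taking $\varepsilon=1/m$ and the corresponding $\tilde f=f_m$ completes the induction. The only genuinely delicate point is the index-shifting induction of the previous paragraph, and the care it demands is essentially bookkeeping of subspace closures; once that is set up, each step reduces to the remark that a restriction manufactures no new points of discontinuity.
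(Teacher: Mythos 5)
Your proposal is correct and follows essentially the same route as the paper's proof: induction on $n$, splitting $X$ into $D_f^\varepsilon$ and $U_f^\varepsilon$, inserting a continuous approximant on the open part as in Theorem \ref{aaa}, and applying the inductive hypothesis to $f^\varepsilon$ on the closed part before gluing. The only difference is that you spell out in full the bookkeeping showing $\tilde f\in\mathscr S_n(X)$, which the paper dispatches in one line by observing that $\tilde f\restriction\domk D_{\tilde f}$ is a restriction of the $\mathscr S_{n-1}$ function chosen on $D_f^\varepsilon$.
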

\begin{proof}
Again induction on $n$. The result holds for $n=1$ (Theorem \ref{aaa}). Assume the claim is fulfilled for some $n$ and consider an $f\in u\mathscr S_{n+1}(X)$ and $\varepsilon>0$. By definition, $f^\varepsilon\in u\mathscr S_n(D_f^\varepsilon)$. Apply the assumption to $f^\varepsilon$ and pick a function $g\in\mathscr S_n(D_f^\varepsilon)$ with $\|f^\varepsilon-g\|_\infty<\varepsilon$. Moreover, as $\omega_f(x)<\varepsilon$ at every $x\in U^\varepsilon_f$, arguing like in the proof of Theorem \ref{aaa}, we can arrive to a continuous function $h\colon U^\varepsilon_f\to\mathbb R$ such that (over $U^\varepsilon_f$) $\|f-h\|_\infty<\varepsilon$. So, $g$ and $h$ taken jointly form a function $\tilde f$ on $X$ satisfying $\|f-\tilde f\|_\infty<\varepsilon$. Moreover, $\tilde f\in\mathscr S_{n+1}(X)$. Indeed, $\domk D_{\tilde f}\subset D^\varepsilon_f$ and so $\tilde f\restriction \domk D_{\tilde f}\subset g\in\mathscr S_n(D^\varepsilon_f)$. Thus $\tilde f\restriction\domk D_{\tilde f}\in\mathscr S_n$. The proof is over.
\end{proof}
\begin{tw}\label{a2sabis}
Let $X$ be $T_5$, $f\in u\mathscr S_n(X)$, and $g\in u\mathscr S_m(X)$. Then $f+g\in u\mathscr S_{n+m}(X)$.
\end{tw}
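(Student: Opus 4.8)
The plan is to reduce the claim to its \emph{discrete} counterpart, that $\mathscr S_n(X)+\mathscr S_m(X)\subseteq\mathscr S_{n+m}(X)$, and then establish the latter by bookkeeping on the iterated discontinuity closures $D^{p\textasteriskcentered0}_{(\cdot)}$ (recall $D^{1\textasteriskcentered0}_f=D^0_f=\domk D_f$). First, using Theorem~\ref{asssa}, I would pick $(\phi_i)_i\subseteq\mathscr S_n(X)$ with $\phi_i\rightrightarrows f$ and $(\psi_i)_i\subseteq\mathscr S_m(X)$ with $\psi_i\rightrightarrows g$; then $\phi_i+\psi_i\rightrightarrows f+g$, and if every $\phi_i+\psi_i$ lies in $\mathscr S_{n+m}(X)$, Theorem~\ref{vgebb} yields $f+g\in u\mathscr S_{n+m}(X)$. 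Thus everything comes down to the key lemma: for all $j,k\ge1$ and all $\phi,\psi\colon X\to\mathbb R$,
\[
D^{(j+k-1)\textasteriskcentered0}_{\phi+\psi}\subseteq D^{j\textasteriskcentered0}_\phi\cup D^{k\textasteriskcentered0}_\psi,
\]
since with $j=n+1$, $k=m+1$ this forces $D^{(n+m+1)\textasteriskcentered0}_{\phi+\psi}=\emptyset$.

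I would prove the key lemma by induction on $j+k$, leaning on four facts about $f\mapsto D^{p\textasteriskcentered0}_f$, none of which needs separation axioms (so the inductive step may freely be applied on subspaces): (i) the level-one bound $D^{1\textasteriskcentered0}_{\phi+\psi}\subseteq D^{1\textasteriskcentered0}_\phi\cup D^{1\textasteriskcentered0}_\psi$, as $\phi+\psi$ is continuous wherever both summands are; (ii) monotonicity $D^{l\textasteriskcentered0}_{f\restriction S}\subseteq D^{l\textasteriskcentered0}_f\cap S$, since a restriction can only destroy discontinuities; (iii) the shift identity $D^{(p+q)\textasteriskcentered0}_f=D^{q\textasteriskcentered0}_{f\restriction D^{p\textasteriskcentered0}_f}$ (immediate from the definition by induction), whose instance $D^{j\textasteriskcentered0}_\phi=D^{(j-1)\textasteriskcentered0}_{\phi\restriction D^{1\textasteriskcentered0}_\phi}$ I will exploit as a rank \emph{boost}; and (iv) the complement principle: on the open set $U=X\setminus D^{1\textasteriskcentered0}_\phi$ the summand $\phi$ is continuous, so there $\phi+\psi$ inherits the iterates of $\psi$, giving $D^{l\textasteriskcentered0}_{\phi+\psi}\cap U\subseteq D^{l\textasteriskcentered0}_\psi$ for every $l$.

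Facts (i) and (iv) clear away the easy part. The base cases $j=1$ or $k=1$ follow from (iv) alone. In general $D^{(j+k-1)\textasteriskcentered0}_{\phi+\psi}\subseteq D^{1\textasteriskcentered0}_\phi\cup D^{1\textasteriskcentered0}_\psi$, and since $j+k-1\ge\max\{j,k\}$, applying (iv) to $\phi$ and symmetrically to $\psi$ sends every point \emph{outside} $D^{1\textasteriskcentered0}_\phi\cap D^{1\textasteriskcentered0}_\psi$ into $D^{j\textasteriskcentered0}_\phi\cup D^{k\textasteriskcentered0}_\psi$. So for $j,k\ge2$ the only survivors lie in $A:=D^{1\textasteriskcentered0}_\phi$, and I would restrict to $A$ and invoke the inductive hypothesis on $A$ for the pair $(j-1,k)$; after the boost (iii) and $D^{k\textasteriskcentered0}_{\psi\restriction A}\subseteq D^{k\textasteriskcentered0}_\psi$ this gives $D^{(j+k-2)\textasteriskcentered0}_{(\phi+\psi)\restriction A}\subseteq D^{j\textasteriskcentered0}_\phi\cup D^{k\textasteriskcentered0}_\psi$.

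The hard part---and the real crux---is that replacing $\phi+\psi$ on $X$ by its restriction to $A$ silently deletes precisely the discontinuities of the sum that are forced \emph{from outside $A$}. As $\phi$ is continuous off $A=D^{1\textasteriskcentered0}_\phi$, those are discontinuities of $\psi$: at level one, $D_{\phi+\psi}\subseteq(D_{\phi+\psi}\cap A)\cup D_\psi$ and $D_{\phi+\psi}\cap A\subseteq D_{(\phi+\psi)\restriction A}\cup D_\psi$ (a short net argument, the witnessing net being taken inside the open set $X\setminus A$), and taking closures yields $D^{1\textasteriskcentered0}_{\phi+\psi}\cap A\subseteq D^{1\textasteriskcentered0}_{(\phi+\psi)\restriction A}\cup D^{1\textasteriskcentered0}_\psi$. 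What remains is to promote this to the iterated discrepancy estimate $D^{p\textasteriskcentered0}_{\phi+\psi}\cap A\subseteq D^{p\textasteriskcentered0}_{(\phi+\psi)\restriction A}\cup D^{p\textasteriskcentered0}_\psi$ for every $p$; taken at $p=j+k-1$ (so that $D^{p\textasteriskcentered0}_\psi\subseteq D^{k\textasteriskcentered0}_\psi$ and $D^{(j+k-1)\textasteriskcentered0}_{(\phi+\psi)\restriction A}\subseteq D^{(j+k-2)\textasteriskcentered0}_{(\phi+\psi)\restriction A}$) it closes the induction. Propagating the discrepancy through the iterates---reconciling, level by level, the discontinuity closure of the global sum with that of its restriction to $A$ while keeping the surplus confined to $D^{p\textasteriskcentered0}_\psi$---is the step I expect to fight hardest; it is exactly where the union $D^{1\textasteriskcentered0}_\phi\cup D^{1\textasteriskcentered0}_\psi$ refuses to decouple, and presumably where the machinery of \cite{MalekIT,Malek} must be pushed further.
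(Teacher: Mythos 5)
Your reduction is exactly the paper's proof: approximate $f$ and $g$ by Theorem \ref{asssa}, observe that the sums converge uniformly to $f+g$, and close with Theorem \ref{vgebb}. The entire remaining content of your proposal --- the ``key lemma'' $D^{(j+k-1)\textasteriskcentered0}_{\phi+\psi}\subseteq D^{j\textasteriskcentered0}_\phi\cup D^{k\textasteriskcentered0}_\psi$, which yields $\mathscr S_n(X)+\mathscr S_m(X)\subseteq\mathscr S_{n+m}(X)$ --- is precisely the result the paper imports from \cite{MalekIT} (where $\mathscr S_n+\mathscr S_m=\mathscr S_{n+m}$ is proved over an arbitrary topological space), so the clean move is simply to cite it. If you insist on proving it yourself, your sketch is not yet a proof: you explicitly leave open the ``discrepancy propagation'' step $D^{p\textasteriskcentered0}_{\phi+\psi}\cap A\subseteq D^{p\textasteriskcentered0}_{(\phi+\psi)\restriction A}\cup D^{p\textasteriskcentered0}_\psi$, and that is a genuine gap, not a routine verification --- the iterates of the global sum are computed by restricting to $D^{p\textasteriskcentered0}_{\phi+\psi}$, which need not be comparable with the iterates of the restricted sum computed inside $A$, and the closure operator does not commute with intersecting by $A$; your level-one net argument does not iterate for free. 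Your facts (i)--(iii) are fine, and (iv) holds because $A$ is closed so the surplus discontinuities stay confined, but as you yourself note, the crux is unresolved. In short: same architecture as the paper, with the one nontrivial ingredient either to be cited from \cite{MalekIT} or still to be proved.
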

\begin{proof}
By Theorem \ref{asssa} there are sequences $(f_s)_{s=1}^\infty\subset\mathscr S_n(X)$, $(g_s)_{s=1}^\infty\subset\mathscr S_m(X)$ such that $f_s\rightrightarrows f$ and $g_s\rightrightarrows g$.
By \cite{MalekIT}, $f_s+g_s\in\mathscr S_{n+m}(X)$ for every $s$. Since $f_s+g_s\rightrightarrows f+g$, from Theorem \ref{vgebb}, $f+g\in u\mathscr S_{n+m}(X)$.
\end{proof}
As it is shown in \cite{MalekIT},
$\mathscr S_n+\mathscr S_m=\mathscr S_{n+m}$ over any topological space. In particular, every function $f\in\mathscr S_n$ can be written as the sum of at most $n$ functions from $\mathcal B^{**}_1=\mathscr S_1$. We were unable to prove the analogous result for $u\mathscr S_n$, however we show that every $f\in\mathscr S_n$ can be decomposed into finitely many terms from $u\mathscr S_1$ (Theorem \ref{vgebvuyb}).

We will say a space $X$ has the property (T) if the following assertion, more general than the Tietze extension theorem is true: {\em Let $Y\subset X$ be a closed subspace, $f\colon Y\to\mathbb R$ an arbitrary function. Then there exists a continuous function $g\colon X\setminus Y\to\mathbb R$ such that if $h=f\cup g$, $\omega_f(x)=\omega_h(x)$ at every $x\in Y$.} Clearly, every space with the property (T) must be $T_4$. It is easy to see that, e.g., $\mathbb R$ with the natural topology has this property.
\begin{prob}
Does every $T_4$ topological space have property {\rm(T)}?
\end{prob}
\begin{lm}\label{jhcffc}
Let $X$ be a $T_6$ space. Then every open subset $O\subset X$ can be written as a union $\bigcup_{k=1}^\infty E_k$ of closed sets $E_k$ such that $E_k\subset\wne E_{k+1}$ for every $k\in\mathbb N$.
\end{lm}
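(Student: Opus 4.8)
The plan is to exploit the defining feature of a $T_6$ (i.e., perfectly normal $T_1$) space: every closed set is the zero set of a continuous real-valued function. Since $X\setminus O$ is closed, I would invoke this (Vedenissoff-type) characterisation to produce a continuous $\varphi\colon X\to[0,1]$ with $\varphi(x)=0$ precisely when $x\notin O$; equivalently $O=\{x\in X:\varphi(x)>0\}$. All the required nesting data will then be read off the level sets of $\varphi$, so no further appeal to normality is needed.

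Concretely, I would set $E_k=\{x\in X:\varphi(x)\ge 1/k\}$ for $k\in\mathbb N$. Each $E_k$ is closed, being the preimage of the closed interval $[1/k,1]$ under the continuous $\varphi$. Their union is exactly $O$: any $x$ with $\varphi(x)>0$ lies in $E_k$ as soon as $1/k\le\varphi(x)$, while $\varphi$ vanishes off $O$, so no point of $X\setminus O$ is captured. (If $O=\emptyset$ then $\varphi\equiv0$ and every $E_k$ is empty, which is still admissible.)

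For the crucial nesting $E_k\subset\wne E_{k+1}$, I would interpose the open set $V_k=\{x\in X:\varphi(x)>1/(k+1)\}$. On $E_k$ one has $\varphi\ge 1/k>1/(k+1)$, hence $E_k\subset V_k$; and plainly $V_k\subset E_{k+1}$. Since $V_k$ is open and contained in $E_{k+1}$, it lies inside $\wne E_{k+1}$, whence $E_k\subset V_k\subset\wne E_{k+1}$, as required.

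The only genuine obstacle is the very first step: one must use that in a $T_6$ space closed sets are precisely zero sets of continuous functions, rather than the weaker (and here insufficient) statement that open sets are $F_\sigma$. Once $\varphi$ is in hand the construction is entirely formal, and it is exactly the strict gap $1/k>1/(k+1)$ between consecutive levels that supplies the open ``buffer'' $V_k$ separating $E_k$ from $E_{k+1}$ and thereby forces $E_k$ into the interior of $E_{k+1}$.
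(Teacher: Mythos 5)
Your proof is correct, but it takes a genuinely different route from the paper's. The paper uses only the more primitive ingredients of the $T_6$ hypothesis: it writes $O=\bigcup_k D_k$ with $D_k$ closed (the $F_\sigma$ property of open sets) and then builds the $E_k$ inductively, at each stage separating the closed set $E_k\cup D_{k+1}$ from $X\setminus O$ by disjoint open sets $U,V$ (normality) and taking $E_{k+1}=\domk U$, so that $E_k\subset U\subset\wne E_{k+1}$. You instead invoke the Vedenissoff characterisation of perfect normality --- every closed set is the zero set of a continuous $\varphi\colon X\to[0,1]$ --- and read the whole nested sequence off the level sets $E_k=\{\varphi\ge 1/k\}$, with the open buffer $\{\varphi>1/(k+1)\}$ supplying the interior containment. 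Your argument is shorter and avoids induction, at the price of appealing to the zero-set characterisation, which is a (standard but nontrivial) theorem if one's definition of $T_6$ is ``normal $T_1$ with all closed sets $G_\delta$''. One small quibble: your closing remark that the $F_\sigma$ property of open sets is ``insufficient'' is not accurate as stated --- combined with normality, which $T_6$ provides, it does suffice, and that is exactly the combination the paper exploits.
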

\begin{proof}
By the $T_6$ property of $X$ we have $O=\bigcup_{k=1}^\infty D_k$, where all $D_k$'s are closed. Set $E_1=D_1$. Assume $E_k\supset D_k$ is defined and take two open and disjoint sets $U,V$ with $E_k\cup D_{k+1}\subset U$ and $X\setminus O\subset V$. Define $E_{k+1}=\domk U$. Clearly, $D_k\subset E_k\subset U\subset\wne\domk U=\wne E_{k+1}\subset E_{k+1}\subset X\setminus V\subset O$ and so $O=\bigcup_{k=1}^\infty D_k\subset\bigcup_{k=1}^\infty E_k\subset O$.
\end{proof}
\begin{tw}\label{vgebvuyb}
Let $f\in u\mathscr S_{n+1}(X)$, $n\in\mathbb N$, $X$ be a $T_6$ space with property {\rm(T)}. Then there exist functions $f_1,f_3\in u\mathscr S_n(X)$, $f_2\in u\mathscr S_1(X)$, such that $f=f_1+f_2+f_3$.
\end{tw}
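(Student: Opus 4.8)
The plan is to bring the rank of $f$ down one oscillation scale at a time and to pay for the gluing with the third summand. Two facts drive everything. First, restricting to a fixed oscillation level lowers the rank: for every $\varepsilon>0$ one has $f^\varepsilon=f\restriction D_f^\varepsilon\in u\mathscr{S}_n(D_f^\varepsilon)$, exactly as already used in the proof of Theorem \ref{vgebb}, since $D_f^{\varepsilon,\delta_1,\dots,\delta_{n+1}}=\emptyset$ for all $\delta_i$. Second, property (T) is an oscillation-preserving extension device: if $Y\subset X$ is closed and $\phi\colon Y\to\mathbb R$ lies in $u\mathscr{S}_k(Y)$, then the function $h=\phi\cup g$ furnished by (T) is continuous off $Y$ and satisfies $\omega_h=\omega_\phi$ on $Y$, so all its iterated oscillation sets coincide with those of $\phi$, whence $h\in u\mathscr{S}_k(X)$. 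Thus any low-rank restriction living on a closed set can be lifted to the whole space without raising its rank.

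The subtlety — and the reason the naive argument does not yield the two-term splitting $u\mathscr{S}_{n+1}=u\mathscr{S}_n+u\mathscr{S}_1$ — is that restricting to $\domk{D_f}$ does \emph{not} lower the rank. Computing the rank of $f\restriction\domk{D_f}$ spends a fresh first oscillation index before one reaches a level $D_f^\varepsilon$, and the net effect is only $f\restriction\domk{D_f}\in u\mathscr{S}_{n+1}(\domk{D_f})$, not $u\mathscr{S}_n$. So one cannot peel $\domk{D_f}$ off in a single step; the rank must be brought down scale by scale, and this is what forces the geometric hypotheses.

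Accordingly I would fix a sequence $\varepsilon_k\downarrow0$ and set $D_k=D_f^{\varepsilon_k}$, a nondecreasing family of closed sets with $\bigcup_k D_k=D_f$ and $f\restriction D_k\in u\mathscr{S}_n(D_k)$ for each $k$. Using $T_6$ through Lemma \ref{jhcffc}, I would interpolate closed collars between successive scales and produce continuous Urysohn cut-offs supported in them; these collars are regions on which the assembled functions are continuous, and they keep neighbouring scales from interacting. I would then distribute the scales by parity: $f_1$ carries the odd $D_k$ and $f_3$ the even ones, each spliced through the collars out of the $(T)$-lifts of the corresponding $f\restriction D_k$. Since within one parity consecutive active scales are separated by a collar of continuity, the iterated oscillation sets of $f_1$ and of $f_3$ are meant to terminate one step earlier than those of $f$, giving $f_1,f_3\in u\mathscr{S}_n(X)$. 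Finally $f_2=f-f_1-f_3$ vanishes, in value and in oscillation, away from the collars, and across a scale-$k$ collar its oscillation is at most $\varepsilon_k\to0$; its $\delta$-oscillation set then meets only finitely many collars, on each of which $f_2$ restricts continuously, so $f_2\in u\mathscr{S}_1(X)$.

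The hard part will be the rank bookkeeping at the interfaces, and especially at the limit points of $\domk{D_f}\setminus D_f$, where $f$ is continuous ($\omega_f=0$) yet infinitely many scales $D_k$ accumulate. I must check that the parity splitting together with the collars really forces $\omega_{f_1}$ and $\omega_{f_3}$ to stay so small there that no $(n+1)$st oscillation level survives, while simultaneously the transition oscillation left over in $f_2$ assembles into a single level rather than a tower. This is precisely where both $T_6$ (to obtain the separating collars via Lemma \ref{jhcffc}) and property (T) (to extend without manufacturing new oscillation) are indispensable; the choice $\varepsilon_k\downarrow0$ and the disjointness of collars within a fixed parity are what should make these estimates close exactly, rather than merely approximately as in Theorem \ref{asssa}.
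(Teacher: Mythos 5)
Your outline never becomes a proof, and its central organizing device fails already at the level of set theory. The sets $D_k=D_f^{\varepsilon_k}$ with $\varepsilon_k\downarrow0$ are \emph{nested increasing}, $D_1\subset D_2\subset\cdots$, so ``distributing the scales by parity'' between $f_1$ and $f_3$ distributes nothing: every odd scale is contained in every later even scale. If $f_1$ and $f_3$ each agree with $f$ on the scales they ``carry'', then on $D_1$ both equal $f$, hence $f_2=f-f_1-f_3=-f$ there, and $-f\restriction D_1=-f^{\varepsilon_1}$ still supports $n$ further levels of iterated oscillation --- exactly what must not happen if $f_2$ is to lie in $u\mathscr S_1$. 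The only way to make ``parity'' meaningful is to work with the differences $D_k\setminus D_{k-1}$; but these are not closed, property (T) extends only from closed subspaces, and their closures accumulate back onto the lower scales. Nor does Lemma \ref{jhcffc} supply the ``collars'' you invoke: it exhausts one fixed \emph{open} set by closed sets and says nothing about separating successive level sets $D_f^{\varepsilon_k}$ from one another. Finally, you explicitly defer ``the rank bookkeeping at the interfaces'' --- but that bookkeeping \emph{is} the theorem; a sketch that postpones it has not started the proof.

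For contrast, the paper makes a single hard cut at $D=D_f^1$: $f_1$ is the (T)-lift of $f\restriction D$, and the decisive point is that property (T) gives $\omega_{f_1}(x)=\omega_f^1(x)$ on $D$, so $D_{f_1}^{\varepsilon}=D_f^{1,\varepsilon}$ and $f_1\in u\mathscr S_n$ because $f\in u\mathscr S_{n+1}$ has already spent its first oscillation index on the level $1$. The multiscale work is then invested in the $u\mathscr S_1$ term $f_2$, not in a split between $f_1$ and $f_3$: Lemma \ref{jhcffc} exhausts the open set $X\setminus D$ by closed sets $E_k$ with $E_k\subset\wne E_{k+1}$, and $f_2$ is a continuous extension, over $X\setminus D$, of the restriction of $f-f_1$ to the closed set $\bigcup_{s}\bigl(D_f^{n\textasteriskcentered 1/(s+1)}\cap(E_{s+1}\setminus\wne E_s)\bigr)$, i.e., to the \emph{deepest} ($n$-times iterated) oscillation sets taken at finer and finer scales as one approaches $D$. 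Subtracting this forces $f_3^{n\textasteriskcentered\varepsilon}$ to vanish near $D$ and hence to be continuous. Your instinct that the scales must be handled one at a time is sound, but the place where the paper pays for it is the top oscillation layer of the remainder (absorbed into $f_2\in\mathcal B_1^{**}$), not a parity decomposition of the nested sets $D_f^{\varepsilon_k}$.
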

\begin{proof}
Set $D=D_f^1$. Then both restrictions $f\restriction D$ and $f\restriction(X\setminus D)$ are in $u\mathscr S_n$.

The term $f_1$ is defined as follows: $f_1=f$ on $D$, while on $X\setminus D$ it is extended continuously so that $\omega_{f_1}(x)=\omega^1_f(x)$ at every $x\in D$ (property~(T)). Clearly, $f_1\in u\mathscr S_n(X)$. Indeed, if $\varepsilon>0$, then $D_{f_1}^\varepsilon\subset D$ and so $D_{f_1}^\varepsilon=D_f^{1,\varepsilon}$; hence the restriction of $f_1$ to $D_{f_1}^{n\textasteriskcentered\varepsilon}=D_f^{1,n\textasteriskcentered\varepsilon}$ (if nonempty) is continuous.

Write down $X\setminus D=\bigcup_{k=1}^\infty E_k$, where the sequence $(E_k)_{k=1}^\infty$ of closed sets is as in Lemma \ref{jhcffc}. Note that the restriction of $f-f_1$ (like of $f$ alone) to the set $X\setminus D$ is in $u\mathscr S_n(X\setminus D)$. Thus, $(f-f_1)\restriction(D_f^{n\textasteriskcentered1}\cap E_1)=g_1$ is continuous. Moreover, for the same reason, $(f-f_1)\restriction\bigl(D_f^{n\textasteriskcentered1}\cap\wne E_1\cup D_f^{n\textasteriskcentered1/2}\cap(E_2\setminus\wne E_1)\bigr)=g_2$ is a continuous function defined on a closed subset of $D_f^{n\textasteriskcentered1/2}\cap E_2$. And so on by induction---note that the restriction of $f-f_1$ to the (closed) set
$$
\bigcup_{s=0}^k\,\bigl(D_f^{n\textasteriskcentered1/(s+1)}\cap(E_{s+1}\setminus\wne E_s)\bigr)\subset D_f^{n\textasteriskcentered1/(k+1)}\cap E_{k+1},\qquad k\in\mathbb N,
$$
(here $E_1\setminus\wne E_0$ we accept as $\wne E_1$), called $g_{k+1}$, is continuous. Having in mind the property of $(E_k)_{k=1}^\infty$ described in Lemma \ref{jhcffc}, one can claim that also the limit $\lim_{k\to\infty}g_k$, which is the restriction of $f-f_1$ to the set
\begin{equation}
\bigcup_{s=0}^\infty\,\bigl(D_f^{n\textasteriskcentered1/(s+1)}\cap(E_{s+1}\setminus\wne E_s)\bigr),\label{woieu}
\end{equation}
is a continuous function and its domain, \eqref{woieu}, is a closed subset of $X\setminus D$. Thus, from the normality of $X\setminus D$, there is a continuous extension $g$ of $\lim_{k\to\infty}g_k$ defined on $X\setminus D$. Set $f_2$ as $g$ on $X\setminus D$ and as $0$ on $D$. Clearly, $f_2\in\mathcal B_1^{**}\subset u\mathscr S_1$.

\smallskip
It remains to check that $f_3=f-f_1-f_2$ is a $u\mathscr S_n$ function. Indeed, for arbitrary positive $\varepsilon<1$ and $i$ (as $f_1$ and $f_2$ were continuous in $X\setminus D$), $D_f^{i\textasteriskcentered\varepsilon}\setminus D=D_{f_3}^{i\textasteriskcentered\varepsilon}\setminus D$. Hence $D_{f_3}^{n\textasteriskcentered\varepsilon}\setminus D=\emptyset$ (since $f$ is $u\mathscr S_n$ on $X\setminus D$). Take $k\in\mathbb N$ such that $\varepsilon>1/k$. By definition, $f_3(x)=0$ at all $x\in D$ and also at every $x\in D_f^{n\textasteriskcentered1/k}\setminus E_{k-1}\subset D_f^{n\textasteriskcentered\varepsilon}\setminus E_{k-1}$. As $E_{k-1}$ is a closed set, $f_3^{n\textasteriskcentered\varepsilon}$ is continuous at every $x$ (from its domain) that lies in $D$. Concluding, $f_3^{n\textasteriskcentered\varepsilon}$ is a continuous function and so $f_3\in u\mathscr S_n$.
\end{proof}
By induction we obtain
\begin{wn}
Let $f$ and $X$ be as in the theorem above. Then $f$ can be written as the sum of at most $2^{n+1}-1$ many terms from $u\mathscr S_1(X)$.
\end{wn}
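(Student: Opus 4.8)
The plan is to prove the slightly more precise statement by induction on the subscript: \emph{for every $m\ge1$, each $f\in u\mathscr S_m(X)$ can be written as a sum of at most $2^m-1$ functions from $u\mathscr S_1(X)$}; the corollary is then the instance $m=n+1$. All the topological substance is already packaged into Theorem \ref{vgebvuyb}, so the induction will merely propagate its three-term splitting and tally the summands.

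For the base case $m=1$ there is nothing to do: a member of $u\mathscr S_1(X)$ is itself a single $u\mathscr S_1$-term, and $2^1-1=1$. For the inductive step, I would assume the statement for some $m\ge1$ and take an arbitrary $f\in u\mathscr S_{m+1}(X)$. Applying Theorem \ref{vgebvuyb} (with its $n$ equal to $m$) gives $f=f_1+f_2+f_3$, where $f_1,f_3\in u\mathscr S_m(X)$ and $f_2\in u\mathscr S_1(X)$. The inductive hypothesis expresses each of $f_1$ and $f_3$ as a sum of at most $2^m-1$ terms from $u\mathscr S_1(X)$, while $f_2$ supplies one further such term; hence $f$ is a sum of at most
\[
(2^m-1)+1+(2^m-1)=2^{m+1}-1
\]
functions from $u\mathscr S_1(X)$, which closes the induction.

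I expect no genuine obstacle here: the argument reduces to solving the branching recurrence $N(m)\le 2N(m-1)+1$ with $N(1)=1$, whose value is $2^m-1$ (equivalently, fully expanding the splitting of Theorem \ref{vgebvuyb} produces a complete binary tree whose leaves are the $u\mathscr S_1$-summands). The one point worth keeping in view is that Theorem \ref{vgebvuyb} gets reapplied only to functions living on the \emph{same} fixed space $X$ — the summands $f_1,f_3$ are defined on $X$ itself, not on some subspace — so the hypotheses that $X$ be $T_6$ with property~(T) remain in force at every stage, and the recursion is legitimate. Finally, $2^{n+1}-1$ is asserted only as an upper estimate, presumably not optimal, which is all the corollary claims.
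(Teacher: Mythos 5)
Your argument is correct and is exactly the induction the paper has in mind (the paper offers no more than the words ``By induction we obtain,'' and the intended recursion is precisely $N(m+1)\le 2N(m)+1$, $N(1)=1$, applied to the three-term splitting of Theorem \ref{vgebvuyb}). Your added remark that the summands $f_1,f_3$ live on the same space $X$, so the hypotheses ($T_6$ plus property (T)) persist through the recursion, is the one point worth making explicit, and you make it.
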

It may be of some interest to have the bound $2^{n+1}-1$ above improved. It might be well even $n+1$.

\end{document}